\newcommand{\RN}[1]{%
  \textup{\uppercase\expandafter{\romannumeral#1}}%
}
\date{}
\begin{document}

\vspace*{1.5cm}

\centerline{}

\centerline {\Large{\bf SIMPLE DERIVATIONS IN TWO VARIABLES}}
\centerline{}

\centerline{\bf {Pankaj Shukla\footnote{The author is financially supported by the JRF grant from CSIR India, Sr.
No. 09/1022(16075)/2022-EMR-I.} \& Anand Parkash }}

\centerline{Department of Mathematics}

\centerline{Indian Institute of Technology Indore, Indore, India.}

\centerline{E-Mail: phd2201141002@iiti.ac.in; anandparkash@iiti.ac.in}

\centerline{}

\newtheorem{Theorem}{\quad Theorem}[section]

\newtheorem{Corollary}[Theorem]{\quad Corollary}

\newtheorem{Lemma}[Theorem]{\quad Lemma}

\newtheorem{Proposition}[Theorem]{\quad Proposition}

\theoremstyle{definition}

\newtheorem{Definition}[Theorem]{\quad Definition}

\newtheorem{Example}[Theorem]{\quad Example}

\newtheorem{Remark}[Theorem]{\quad Remark}

\numberwithin{equation}{Theorem}

\begin{abstract}
Let $k$ be a field of characteristic zero. If $c_1, c_2\in k\setminus \{0\}, s,t\geq 1$ and $u\geq 0$,
then it is shown that the $k$-derivations $\partial_x + x^u(c_1x^ty^s+c_2)\partial_y$ and
$\partial_x + x^u(c_1x^t+c_2y^{s+1})\partial_y$ of $k[x,y]$ are simple.
We also give a necessary and sufficient condition for the $k$-derivation $y^r\partial_x + (c_1x^{t_1}y^{s_1}+c_2x^{t_2}y^{s_2})\partial_y$,
where $r, t_1, s_1, t_2, s_2 \geq 0$ and $c_1, c_2\in k$,
of $k[x,y]$ to be simple.
\end{abstract}

\noindent
{\bf Mathematics Subject Classification:} 13N15; 13C99.\\
{\bf Keywords:} Derivations; Simple derivation; $d$-Simple ring.

\section{Introduction}

Let $k$ be a field of characteristic zero and let $A$ be a $k$-algebra. A derivation $d$ of $A$
is said to be simple if $\{0\}$ and $A$ are the only $d$-differential ideals of $A$.
This kind of derivations are significant in several areas such
as constructing nonholonomic irreducible modules over Weyl algebra \cite{c07},
finding examples of noncommutative simple rings,
simple lie rings \cite{g89} and in proving simplicity of Ore extension \cite{g89}.
Some applications and characteristics of simple derivations are also provided in \cite{b03,j81,n94}.
Therefore it is natural and interesting to find simple derivations of commutative rings.
However, finding examples of simple derivations is difficult. It is quite complicated
even for the polynomial ring in two variables over a field of characteristic zero.
The question of $d$-simplicity of a commutative ring has been studied by several authors,
e.g., \cite{c07,g09,k12,k13,k14,l08,m01,n08,y19}.

Let $A=k[x,y]$ and $d$ be a $k$-derivation of $A$. It is obviously hard to find out conditions on
$d(x)$ and $d(y)$ for $d$ to be simple, in complete generality.
In \cite{l08}, Lequain gave an algorithmic characterization of all simple Shamsuddin
derivations (i.e. $d=\partial_x+(a(x)y+b(x))\partial_y$). In \cite{n08}, Nowicki prove that
the derivation $d=\partial_x+(y^s+cx)\partial_y$ is simple if $s\geq 2$ and $c\in k\setminus\{0\}$.
In \cite{g09}, Gavran proved that the derivation $d=\partial_x+(y^s+cx^t)\partial_y$ is simple if $s\geq 2, t\geq 1$ and $c\in k\setminus\{0\}$.
In \cite{k12,k13,k14}, Kour and Maloo proved that the derivation $d=y^r\partial_x+(y^sx^t+c)\partial_y$
is simple if $s,t\geq 1$, $r\geq 0$ and $c\in k\setminus\{0\}$.
In this article, we prove that the $k$-derivation $\partial_x + x^u(c_1x^ty^s+c_2)\partial_y$ is simple
if $s,t\geq 1$, $u\geq 0$ and  $c_1, c_2\in k\setminus \{0\}$; and the $k$-derivation
$\partial_x + x^u(c_1x^t+c_2y^{s})\partial_y$ of $k[x,y]$ is simple if $s\geq 2,t\geq 1, u\geq 0$ and  $c_1, c_2\in k\setminus \{0\}$.
We also give a necessary and sufficient condition for the $k$-derivation $y^r\partial_x + (c_1x^{t_1}y^{s_1}+c_2x^{t_2}y^{s_2})\partial_y$,
where $r, t_1, s_1, t_2, s_2 \geq 0$ and $c_1, c_2\in k$,
of $k[x,y]$ to be simple.

\section{Preliminaries}

Throughout this article, $A$ is a commutative ring with unity and $k$ is a field of characteristic zero.
We now recall some definitions.

A derivation $d$ of $A$ is a map $d:A\rightarrow A$ satisfying $d(a+b)=d(a)+d(b)$ and
$d(ab)=ad(b)+bd(a)$ for all $a,b \in A$.
A derivation $d$ of $A$ is uniquely extendable to every localization of $A$. The
extension of $d$ to a localization of $A$ shall also be denoted by $d$.

An ideal $I$ of $A$ is called $d$-differential if $d(I)\subseteq I$. A derivation $d$ of $A$
is called simple if $\{0\}$ and $A$ are the only $d$-differential ideals of $A$.

An element $f \in A$  is called a Darboux element of $d$ if $f$ is a nonzero nonunit in $A$ and $d(f)=\lambda f$
for some $\lambda \in A$.

Let $A$ be a $k$-algebra. Then by a $k$-derivation of $A$, we mean a derivation of $A$ which is also $k$-linear.
Two $k$-derivations $d$ and $\overline{d}$ are called equivalent if there exists a $k$-algebra automorphism
$\theta$ such that $\overline{d} = \theta d \theta^{-1}$. It is easy to see that if two $k$-derivations $d$ and $\overline{d}$ are equivalent,
then $d$ is simple if and only if $\overline{d}$ is simple.

\begin{Definition}
Let $a,b\in k[x], a\neq 0$. By division algorithm, inductively define $q_1, q_2, \dots \in k[x]$ and
$r_1, r_2, \dots \in k[x]$ such that $\deg_x(r_i) < \deg_x(a)$ for every $i$ and $b=q_1 a+r_1$,
$\partial_x(q_1) = q_2a+r_2,\dots, \partial_x(q_i) = q_{i+1}a+r_{i+1}$, \dots. Put $q_0=1$. Then
there exists an integer $t\geq 0$ such that $q_t\neq 0$ and $q_{t+1}=0$. The polynomial $\sum_{i=1}^{t+1} r_i$
will be denoted by $\mathfrak{p}(a,b)$.
\end{Definition}

We now recall some results.

\begin{Lemma}\label{000}
Let $a,b\in k[x], a\neq 0, \epsilon \in k\setminus \{0\}$ and $m,n\geq 0$. Then:
\begin{enumerate}
\item[{\rm (a)}] $\deg_x(\mathfrak{p}(a,b)) < \deg_x(a)$.

\item[{\rm (b)}] $\mathfrak{p}(a,\epsilon b)= \epsilon \mathfrak{p}(a,b)$.

\item[{\rm (c)}] $\mathfrak{p}(a,b)=b$ if $\deg_x(a) > \deg_x(b)$.

\item[{\rm (d)}] $\mathfrak{p}(x^n, x^{m+n+1})= (m+1)   \mathfrak{p}(x^n, x^m)$.

\item[{\rm (e)}] If $m>n$ then $\mathfrak{p}(x^n, x^m)= 0 \Leftrightarrow (n+1)|(m-n)$.
\end{enumerate}
\end{Lemma}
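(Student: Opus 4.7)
The plan is to handle parts (a)--(c) as direct unwrappings of the definition, to derive (d) via a one-step reduction that exploits the $k$-linearity of Euclidean division and of $\partial_x$, and to obtain (e) by iterating (d) down to a base case determined by $m$ modulo $n+1$.

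For (a), every $r_i$ is a remainder upon division by $a$, hence of degree less than $\deg_x(a)$, so the finite sum inherits the same bound. For (b), I would observe that replacing $b$ by $\epsilon b$ multiplies every $q_i$ and every $r_i$ by $\epsilon$, since both the division algorithm and $\partial_x$ are $k$-linear, while leaving the stopping index $t$ unchanged. For (c), if $\deg_x(b) < \deg_x(a)$ the first division step gives $q_1 = 0$ and $r_1 = b$, after which $\partial_x(q_1) = 0$ terminates the recursion at $t = 0$, yielding $\mathfrak{p}(a,b) = b$.

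For (d), I would begin by noting $x^{m+n+1} = x^{m+1} \cdot x^n + 0$, so in the sequence associated to $x^{m+n+1}$ one has $q_1 = x^{m+1}$ and $r_1 = 0$, and then $\partial_x(q_1) = (m+1)x^m$. From this point onward the recursion coincides with the one applied to $(m+1)x^m$, which by (b) is $(m+1)$ times the recursion applied to $x^m$. Summing all the remainders and using $r_1 = 0$ then gives $\mathfrak{p}(x^n, x^{m+n+1}) = (m+1)\,\mathfrak{p}(x^n, x^m)$.

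For (e), I would write $m = j(n+1) + m_0$ with $0 \le m_0 \le n$; since $m > n$, we have $j \ge 1$. Iterating (d) $j$ times produces $\mathfrak{p}(x^n, x^m) = C \cdot \mathfrak{p}(x^n, x^{m_0})$ for a constant $C$ which is a product of positive integers, hence nonzero in characteristic zero. If $m_0 < n$, then (c) gives $\mathfrak{p}(x^n, x^{m_0}) = x^{m_0} \neq 0$; if $m_0 = n$, then $x^n = 1 \cdot x^n + 0$ gives $r_1 = 0$ and $q_2 = \partial_x(1) = 0$, so $\mathfrak{p}(x^n, x^n) = 0$. The equivalence $m_0 = n \Leftrightarrow (n+1)\mid(m-n)$ is immediate from the decomposition. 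The only place that requires real care is the index shift in (d): the sequence for $x^{m+n+1}$ runs one step ahead of that for $(m+1)x^m$, and the clean factor $(m+1)$ emerges only because the extra initial step contributes a zero remainder.
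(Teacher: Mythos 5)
Your proposal is correct and follows essentially the same route as the paper: part (d) via the single division step $x^{m+n+1}=x^{m+1}\cdot x^n+0$ together with linearity, and part (e) by reducing $m$ modulo $n+1$ and checking the base cases $m_0<n$ and $m_0=n$. The only difference is that you also spell out (a)--(c) and the nonvanishing of the accumulated constant, which the paper dismisses as well known or leaves implicit.
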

\begin{proof}
(a), (b) and (c) are well known.

(d) As $x^{m+n+1}=x^{m+1}x^n+0$, therefore $$\mathfrak{p}(x^n, x^{m+n+1})= \mathfrak{p}(x^n, (m+1)x^m)+0= (m+1)   \mathfrak{p}(x^n, x^m).$$

(e) Dividing $m$ by $n+1$, there exists $q,r\geq 0$ such that $m=q(n+1)+r$, where $r\leq n$. Using (c) and (d), we get
$$\mathfrak{p}(x^n, x^m)= 0 \Leftrightarrow \mathfrak{p}(x^n, x^r)= 0 \Leftrightarrow r=n \Leftrightarrow (n+1)|(m-n).$$
\end{proof}

\begin{Theorem}\label{001}
Let $d$ denote the $k$-derivation $\partial_x + (ay+b)\partial_y$ of $k[x,y]$,
where $a,b\in K[x]$ and $a\neq 0$. Then $d$ is a simple derivation of $k[x,y]$ if and only if the polynomial $\mathfrak{p}(a,b) \neq 0$.
\end{Theorem}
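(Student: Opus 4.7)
The plan is to translate simplicity of $d$ into a question about solvability of a first-order linear ODE in $k[x]$, and then to translate that ODE question into the vanishing of $\mathfrak{p}(a,b)$.

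The key observation is that for $\phi \in k[x]$ a direct computation gives
\begin{equation*}
d(y-\phi) = (ay+b) - \phi' = a(y-\phi) + (b - \phi' + a\phi),
\end{equation*}
so $d(y-\phi) = a(y-\phi)$ precisely when $\phi' = a\phi + b$. Hence if such a $\phi$ exists, then $(y-\phi)$ is a proper $d$-differential ideal and $d$ is not simple. This handles one half of the reduction.

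For the converse, suppose $d$ admits a nonzero proper $d$-differential ideal $I$. Since $d$ restricts to $\partial_x$ on $k[x]$, and $\partial_x$ is easily seen to be simple on $k[x]$ (a minimum-degree element of a $\partial_x$-stable nonzero ideal has vanishing derivative, hence is a nonzero constant), we must have $I\cap k[x]=0$. Therefore $I$ extends to a proper $d$-differential ideal of the PID $k(x)[y]$, generated by a monic polynomial $g = y^n + g_{n-1}y^{n-1}+\cdots+g_0$ with $g_i\in k(x)$ and $n\geq 1$. A leading-term calculation gives $d(g) = na\,y^n + (\text{lower }y\text{-degree terms})$, so minimality of $n$ forces $d(g)=nag$; comparing the $y^{n-1}$ coefficients yields $g_{n-1}' = ag_{n-1} - nb$, and $\phi := -g_{n-1}/n \in k(x)$ satisfies $\phi' = a\phi + b$. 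The final sub-step — upgrading $\phi$ from $k(x)$ to $k[x]$ — is done by writing $\phi=p/q$ in lowest terms with $q$ monic: if $q$ were nonconstant, reducing the identity $p'q - pq' = apq + bq^2$ modulo $q$ gives $q\mid pq'$, hence $q\mid q'$, contradicting $\deg q' < \deg q$. This reverse implication (producing a polynomial solution of the ODE from an arbitrary nonzero $d$-differential ideal) is the main technical obstacle, as it requires the passage to $k(x)[y]$ together with the denominator-clearing argument.

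The last step is to show the ODE $\phi'=a\phi+b$ has a polynomial solution iff $\mathfrak{p}(a,b)=0$. Using the division sequence from the definition of $\mathfrak{p}$, the candidate $\phi_0 := -\sum_{i=1}^{t} q_i$ telescopes to
\begin{equation*}
\phi_0' - a\phi_0 \;=\; b - \mathfrak{p}(a,b),
\end{equation*}
so if $\mathfrak{p}(a,b)=0$ then $\phi_0$ solves the ODE. Conversely, given any polynomial solution $\phi$, the difference $\psi := \phi - \phi_0 \in k[x]$ satisfies $\psi' = a\psi + \mathfrak{p}(a,b)$; assuming $\mathfrak{p}(a,b)\neq 0$, Lemma \ref{000}(a) gives $\deg\mathfrak{p}(a,b) < \deg a$, which in particular forces $\deg a \geq 1$, and a direct degree comparison on $\psi' = a\psi + \mathfrak{p}(a,b)$ (LHS has degree $\deg\psi - 1$, RHS has degree $\deg a + \deg\psi$) rules out any polynomial $\psi$. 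Hence $\mathfrak{p}(a,b)=0$, closing the equivalence and therefore the theorem.
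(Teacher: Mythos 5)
Your proof is correct, but it differs from the paper in that the paper does not actually prove Theorem~\ref{001} at all: it simply cites Lequain's results (Theorem~2.1 and Lemma~2.3 of \cite{l08}), whereas you give a complete self-contained argument. Your route — (i) reduce simplicity to the non-existence of a polynomial solution of $\phi'=a\phi+b$ via the Darboux element $y-\phi$ and the passage to the PID $k(x)[y]$ with the denominator-clearing step, and (ii) show via the telescoping identity $\phi_0'-a\phi_0=b-\mathfrak{p}(a,b)$ with $\phi_0=-\sum_{i=1}^t q_i$ that the ODE is polynomially solvable exactly when $\mathfrak{p}(a,b)=0$ — is essentially a reconstruction of the content of the two cited results, and every step checks out: the identity $g_{n-1}'=ag_{n-1}-nb$ from comparing $y^{n-1}$-coefficients in $d(g)=nag$, the lowest-terms argument $q\mid q'$ forcing $q$ constant, and the degree comparison ruling out solutions of $\psi'=a\psi+\mathfrak{p}(a,b)$ when $\mathfrak{p}(a,b)\neq 0$ (which indeed forces $\deg a\geq 1$, so the comparison is valid in all cases including $\psi$ constant). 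What your version buys is independence from \cite{l08}; what the citation buys the authors is brevity and the full strength of Lequain's algorithmic characterization for Shamsuddin derivations in more variables. The only cosmetic quibble is your phrase ``minimality of $n$ forces $d(g)=nag$'': what is actually used is that $(g)$ is $d$-stable and $\deg_y d(g)\leq n$, so the cofactor lies in $k(x)$ and equals $na$ by comparing leading coefficients.
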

\begin{proof}
It follows from \cite[Theorem~2.1 and Lemma~2.3]{l08}.
\end{proof}

\begin{Proposition}\label{p01}
Let $d$ be a $k$-derivation of $k[x,y]$ such that $(d(x),d(y))=k[x,y]$. Then $d$ is a simple derivation of $k[x,y]$
if and only if $d$ has no Darboux elements in $k[x,y]$.
\end{Proposition}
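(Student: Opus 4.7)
The plan is to prove the two implications separately, with the nontrivial direction reduced to a standard passage from $d$-differential ideals to $d$-differential primes. For the forward implication, suppose $f \in k[x,y]$ is a Darboux element, so $f$ is a nonzero nonunit with $d(f)=\lambda f$ for some $\lambda \in k[x,y]$. Then the principal ideal $(f)$ is $d$-differential, since $d(gf)=(d(g)+\lambda g)f \in (f)$ for every $g \in k[x,y]$, and it is neither zero nor the whole ring. Hence $d$ is not simple, and the hypothesis $(d(x),d(y))=k[x,y]$ is not even needed for this direction.

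For the converse I would argue contrapositively: assume $d$ is not simple and produce a Darboux element. Pick a nonzero proper $d$-differential ideal $I \subseteq k[x,y]$. Since $\mathrm{char}(k)=0$, Seidenberg's theorem gives that the radical $\sqrt{I}$ is again $d$-differential. A short direct argument then shows that each minimal prime $P$ over $\sqrt{I}$ is also $d$-differential: writing $\sqrt{I}=P_1\cap\cdots\cap P_n$, for $a\in P_i$ and any $b\in\bigcap_{j\ne i}P_j\setminus P_i$ one has $ab\in\sqrt{I}$, so $b\,d(a)=d(ab)-a\,d(b)\in P_i$, forcing $d(a)\in P_i$. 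Fix such a $d$-differential prime $P$; it is nonzero and proper, and since $k[x,y]$ has Krull dimension two, $\operatorname{ht}(P)\in\{1,2\}$.

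In the height-one case I would invoke that $k[x,y]$ is a UFD: $P=(f)$ for some irreducible $f$, and $d(f)\in(f)$ yields $d(f)=\lambda f$. Since $f$ is irreducible, it is a nonzero nonunit, so $f$ is a Darboux element of $d$, as required. The height-two case must be excluded: then $P$ is maximal and $L:=k[x,y]/P$ is a finitely generated $k$-algebra that is a field, hence by Zariski's lemma a finite algebraic, and (being in characteristic zero) separable, extension of $k$. Every $k$-derivation of such an $L$ vanishes identically. But $d(P)\subseteq P$ induces a $k$-derivation of $L$, which must therefore be zero, giving $d(x),d(y)\in P$ and hence $(d(x),d(y))\subseteq P\subsetneq k[x,y]$, contradicting the hypothesis.

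The main obstacle is conceptual rather than computational: it is the passage from an arbitrary $d$-differential ideal to a $d$-differential prime, carried out via Seidenberg's theorem and the minimal-prime argument above. Once that reduction is in place, the remainder is a clean two-case split driven by the UFD property of $k[x,y]$ and by the hypothesis $(d(x),d(y))=k[x,y]$, which enters the argument in exactly one spot, namely to rule out the maximal-ideal case.
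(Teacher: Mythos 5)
Your argument is correct. The paper itself gives no proof here: it simply cites Kour's Proposition~2.1 from \cite{k12}, so what you have done is reconstruct a complete, self-contained proof of the outsourced fact. Your route is the standard one and almost certainly the same as in the cited source: the easy direction via the principal ideal $(f)$ (where, as you note, the hypothesis on $(d(x),d(y))$ is not needed), and the converse via the passage from an arbitrary nonzero proper $d$-differential ideal to a $d$-differential prime using Seidenberg's theorem together with the minimal-prime computation $b\,d(a)=d(ab)-a\,d(b)$, followed by the dichotomy on the height of that prime. The height-one case uses that $k[x,y]$ is a UFD, so the prime is principal and its generator is a Darboux element; the height-two case is excluded by Zariski's lemma plus the vanishing of $k$-derivations on a finite separable extension, which is precisely where $(d(x),d(y))=k[x,y]$ enters. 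All steps check out (including the existence of $b\in\bigcap_{j\neq i}P_j\setminus P_i$, which is automatic since distinct minimal primes are incomparable, and the degenerate case $n=1$ where one takes $b=1$). The only difference from the paper is presentational: the paper buys brevity by citation, while your version makes the proposition independent of \cite{k12} and makes visible exactly where each hypothesis is used.
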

\begin{proof}
It follows from \cite[Proposition~2.1]{k12}.
\end{proof}

\begin{Theorem}\label{002}
Let $d$ denote the $k$-derivation $y^r\partial_x + (c_1x^ty^s+c_2)\partial_y$ of $k[x,y]$,
where $r \geq 0, s,t \geq 1$ and $c_1, c_2\in K\setminus \{0\}$. Then $d$ is a simple derivation of $k[x,y]$.
\end{Theorem}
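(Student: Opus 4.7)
The plan is to apply Proposition~\ref{p01}. First I would verify that $(d(x),d(y)) = (y^r,\, c_1 x^t y^s + c_2)$ equals $k[x,y]$: the case $r = 0$ is immediate since $y^0 = 1$, while for $r \geq 1$ any maximal ideal $(x-\alpha,\, y-\beta)$ containing both generators would force $\beta = 0$ from $y^r$, and then $c_1\alpha^t\beta^s + c_2 = c_2 \neq 0$, a contradiction. Hence Proposition~\ref{p01} reduces simplicity of $d$ to showing $d$ has no Darboux elements in $k[x,y]$.

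Suppose for contradiction that $f$ is a Darboux element: $d(f) = \lambda f$ with $\lambda \in k[x,y]$, $f$ a nonzero nonunit. Since $k[x,y]$ is a UFD, a standard partial-fractions argument shows every irreducible factor of $f$ is again Darboux, so one may assume $f$ is irreducible. I would first handle the easy case $n := \deg_y f = 0$: then $f = a(x) \in k[x]\setminus k$ and $d(f) = y^r a'(x) = \lambda a(x)$ forces $\lambda = y^r a'(x)/a(x) \in k[x,y]$; thus $a \mid a'$ in $k[x]$ and, since $\deg_x a' < \deg_x a$, $a' = 0$, making $f$ a unit, a contradiction.

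For $n \geq 1$, I would write $f = \sum_{i=0}^n a_i(x) y^i$ with $a_n \neq 0$ and dually $f = \sum_{j=0}^m b_j(y) x^j$ with $b_m \neq 0$. Expanding
\[
d(f) = \sum_{i=0}^n a_i'(x)\, y^{i+r} + c_1 x^t \sum_{i=1}^n i\, a_i(x)\, y^{i+s-1} + c_2 \sum_{i=1}^n i\, a_i(x)\, y^{i-1},
\]
the top $y$-degree of $d(f)$ is $\max(n+r,\, n+s-1)$, while the top $x$-degree is $m+t$ with leading coefficient $c_1 y^s b_m'(y)$. Matching these with the corresponding leading terms of $\lambda f$ and invoking the principle ``if $g \mid g'$ in $k[x]$ or $k[y]$ and $g \neq 0$, then $g \in k$'' would pin the top coefficients $a_n(x)$ and $b_m(y)$ into $k$. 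I would then propagate the analysis downward through successively lower $y$-powers until the $y^0$-component of $d(f) = \lambda f$ reveals that the contribution $c_2 a_1(x)$ from $c_2\, \partial_y f$ cannot be reconciled --- this is precisely the obstruction that collapses when $c_2 = 0$, under which $y$ itself becomes Darboux since $d(y) = c_1 x^t y^s$.

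The hard part will be the case split according to whether $r \geq s$, $r = s - 1$, or $r \leq s - 2$: each regime has a different term of $d(f)$ controlling the top $y$-degree, so the downward propagation of constraints on the $a_i(x)$ must be tracked case by case, and the coupling between the $y$- and $x$-filtrations must be exploited carefully. A cleaner alternative would be to invoke the results of Kour and Maloo~\cite{k12,k13,k14}, in which this theorem is essentially established.
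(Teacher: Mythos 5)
There is a genuine gap: your primary argument is a plan whose decisive step is explicitly deferred rather than carried out. Everything up to and including the reduction to an irreducible Darboux element and the disposal of the case $\deg_y f=0$ is fine, but the entire difficulty of the theorem lives in the ``downward propagation'' that you postpone to a case split on $r\geq s$, $r=s-1$, $r\leq s-2$ and never execute. Moreover, at least one of the concrete claims you do make is unjustified: matching the top $x$-degree gives $c_1y^s b_m'(y)=\mu(y)\,b_m(y)$ for the leading $x$-coefficient $\mu$ of $\lambda$, i.e.\ $b_m \mid y^s b_m'$ in $k[y]$, and this does \emph{not} force $b_m\in k$ (take $b_m=y^j$). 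Similarly, when $r\leq s-2$ the top $y$-coefficient equation reads $c_1 n x^t a_n=\lambda_{s-1}a_n$, which is satisfied identically and pins down nothing. So the ``principle $g\mid g'$ implies $g\in k$'' does not apply where you invoke it, and the obstruction you expect at the $y^0$-level is never exhibited. This is precisely the content that occupies the cited papers of Kour and Maloo, so it cannot be waved through.

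Your closing remark, however, is essentially the paper's actual proof, and with two small additions it becomes complete. The results of \cite{k14} (Theorems~4.1 and~6.1) establish simplicity of $y^r\partial_x+(x^ty^s+c_2)\partial_y$, i.e.\ the case $c_1=1$; to deduce Theorem~\ref{002} for arbitrary $c_1\neq 0$ one first reduces to $k$ algebraically closed via \cite[Proposition~13.1.3]{n94}, then chooses $\alpha$ with $\alpha^{t(r+1)+s}=c_1^{-1}$ and conjugates by the automorphism $\theta(x)=\alpha^{r+1}x$, $\theta(y)=\alpha y$, which carries $d$ to $\alpha^{-1}$ times the normalized derivation. Since equivalent derivations (up to a nonzero scalar) are simultaneously simple, the theorem follows. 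If you intend the citation route, you must supply this normalization; ``essentially established'' is not a proof of the general-$c_1$ statement.
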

\begin{proof}
By \cite[Proposition~13.1.3]{n94}, we can assume that $k$
is an algebraically closed field. Let $\bar{d}$ denote the $k$-derivation $y^r\partial_x + (x^ty^s+c_2)\partial_y$ of $k[x,y]$.
Then by \cite[Theorem~4.1 and Theorem~6.1]{k14}, $\bar{d}$ is simple.
Let $\alpha \in k$ be such that $\alpha^{t(r+1)+s}=c_1^{-1}$. Define the $k$-algebra automorphism
$\theta:k[x,y]\rightarrow k[x,y]$ by $\theta(x)=\alpha^{r+1} x$ and $\theta(y)=\alpha y$.
Then $\theta d \theta^{-1} = \alpha^{-1}\bar{d}$ and hence $d$ is simple.
\end{proof}

From here onward we shall assume that $k$ is an algebraically closed field of characteristic $0$.

\section{The derivation $\partial_x + x^u(c_1x^ty^s+c_2)\partial_y$ of $k[x,y]$}

For this section, we fix some notations. Let $x, y$ be indeterminates, $u\geq 0, s,t\geq 1$ be integers.
Put $v=t+(u+1)s$. Let $\omega \in k$ be a primitive $v$th root of unity. Let $\psi$ denote the $k$-algebra automorphism of
$k[x,y]$ given by $\psi(x)=\omega x$ and $\psi(y)=\omega^{u+1} y$. These notations shall remain fixed throughout this section.

In this section, we discuss the simplicity of the $k$-derivation $\partial_x + x^u(c_1x^ty^s+c_2)\partial_y$ of $k[x,y]$.

We now prove some lemmas.

\begin{Lemma}\label{l01}
The $k$-derivation $\bar{d} = \partial_x + x^u(c_1x^ty^s+c_2)\partial_y$, where
$c_1, c_2\in k\setminus \{0\}$, of $k[x,y]$ is simple if and only if the $k$-derivation
$d = \partial_x + x^u(x^ty^s+1)\partial_y$ of $k[x,y]$ is simple.
\end{Lemma}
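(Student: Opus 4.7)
The plan is to mimic the argument used in the proof of Theorem~\ref{002}: show that $\bar{d}$ and $d$ are equivalent up to a nonzero scalar factor via a diagonal $k$-algebra automorphism of $k[x,y]$. Since multiplying a derivation by a nonzero element of $k$ does not affect simplicity and equivalent $k$-derivations are simple together, this will settle the lemma.

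Concretely, I would look for $\alpha,\beta\in k\setminus\{0\}$ and define the $k$-algebra automorphism $\theta$ of $k[x,y]$ by $\theta(x)=\alpha x$, $\theta(y)=\beta y$, and then try to arrange $\theta\,\bar d\,\theta^{-1}=\alpha^{-1}d$. A direct computation on generators gives
\[
\theta\,\bar d\,\theta^{-1}(x)=\alpha^{-1},\qquad \theta\,\bar d\,\theta^{-1}(y)=\alpha^{u}\beta^{-1}\bigl(c_1\alpha^{t}\beta^{s}x^{u+t}y^{s}+c_2 x^{u}\bigr),
\]
while $\alpha^{-1}d(x)=\alpha^{-1}$ and $\alpha^{-1}d(y)=\alpha^{-1}x^{u}(x^{t}y^{s}+1)$. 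Matching the two monomials in $d(y)$ yields the two conditions
\[
c_2\,\alpha^{u+1}=\beta,\qquad c_1\,\alpha^{u+t+1}\beta^{s-1}=1.
\]
Substituting the first into the second gives $c_1 c_2^{s-1}\alpha^{u+t+1+(u+1)(s-1)}=1$, and the exponent simplifies to $t+s(u+1)=v$. Hence it suffices to choose $\alpha\in k$ with $\alpha^{v}=(c_1 c_2^{s-1})^{-1}$, which exists because $k$ is algebraically closed of characteristic zero (as assumed from this point of the paper onward), and then set $\beta=c_2\alpha^{u+1}$.

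With these choices $\theta\,\bar d\,\theta^{-1}=\alpha^{-1}d$, so $\bar d$ is simple iff $\alpha^{-1}d$ is simple iff $d$ is simple, completing the proof. There is really no ``hard step'' here; the only mild check is the arithmetic verifying that the exponent of $\alpha$ collapses to $v=t+(u+1)s$, which is precisely the quantity fixed at the start of the section, suggesting that the choice of $v$ was made with this normalization in mind.
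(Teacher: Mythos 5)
Your proposal is correct and is essentially the paper's own proof: the authors take exactly the diagonal automorphism $\theta(x)=\alpha x$, $\theta(y)=c_2\alpha^{u+1}y$ with $\alpha^v=c_2^{1-s}c_1^{-1}$, which is precisely the choice your two matching conditions force. You have merely written out the verification that the exponent collapses to $v$, which the paper leaves implicit.
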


\begin{proof}
Let $\alpha \in k$ be such that $\alpha^v=c_2^{1-s}c_1^{-1}$. Define the $k$-algebra automorphism
$\theta:k[x,y]\rightarrow k[x,y]$ by $\theta(x)=\alpha x$ and $\theta(y)=c_2\alpha^{u+1} y$.
Then $\theta \bar{d} \theta^{-1} = \alpha^{-1}d$. Therefore, $\bar{d}$ is simple if and only if $d$ is simple.
\end{proof}

\begin{Lemma}\label{l02}
Let $d$ denote the $k$-derivation $\partial_x + x^u(x^ty^s+1)\partial_y$ of $k[x,y]$.
If $d$ is not a simple derivation of $k[x,y]$, then there exists
$F=\sum_{i=0}^m b_ix^i$, where $m>0$ and $b_i\in k[y]$ for $0\leq i \leq m$, such that
$d(F)=bx^{t+u}y^{s-1}F$, $\psi(F)=F$, $b$ is a non negative integer and $b_m=y^b$.
\end{Lemma}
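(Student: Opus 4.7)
My plan is to produce $F$ by starting from any Darboux element of $d$, averaging over the cyclic group generated by $\psi$ to enforce $\psi(F)=F$, and then reading off the form of the eigenvalue $\lambda$ from combined weight-and-degree constraints. Since $d(x)=1$, the ideal $(d(x),d(y))$ equals $k[x,y]$, so Proposition~\ref{p01} furnishes a Darboux element $F_0\in k[x,y]$; using that each irreducible factor of a Darboux element is itself Darboux (standard pole-order analysis of $d(F_0)/F_0$), I may assume $F_0$ is irreducible with $d(F_0)=\mu F_0$.

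A direct computation using $\omega^v=1$ shows $\psi d\psi^{-1}=\omega^{-1}d$, so each $\psi^i(F_0)$ is again an irreducible Darboux element. Setting $F:=\prod_{i=0}^{v-1}\psi^i(F_0)$ gives a Darboux element with $\psi(F)=F$, since the factors are cyclically permuted. If $F_0\in k[y]$, then $k$ being algebraically closed forces $F_0=y-\alpha$, but $d(y-\alpha)=x^u(x^ty^s+1)$ does not vanish at $y=\alpha$, contradicting $(y-\alpha)\mid d(F_0)$; hence $\deg_x(F_0)\geq 1$ and $m:=\deg_x(F)\geq v>0$.

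Now write $d(F)=\lambda F$. Applying $\psi$ to both sides and using $\psi d\psi^{-1}=\omega^{-1}d$ together with $\psi(F)=F$ forces $\psi(\lambda)=\omega^{-1}\lambda$, so every monomial $x^ay^b$ appearing in $\lambda$ satisfies $a+(u+1)b\equiv v-1\pmod{v}$. Expanding $d(F)=\partial_x F+x^u\partial_y F+x^{u+t}y^s\partial_y F=\lambda F$ and comparing $x$- and $y$-degrees yields $\deg_x\lambda\leq u+t$ and $\deg_y\lambda\leq s-1$. On the box $[0,u+t]\times[0,s-1]$, the quantity $a+(u+1)b$ lies in $[0,(u+t)+(u+1)(s-1)]=[0,v-1]$, so the congruence forces $a+(u+1)b=v-1$ exactly, and direct inspection singles out $(a,b)=(u+t,s-1)$. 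Therefore $\lambda=c\,x^{u+t}y^{s-1}$ for some $c\in k$.

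Finally, comparing coefficients of $x^{m+u+t}$ in $d(F)=c\,x^{u+t}y^{s-1}F$ gives $y\,b_m'(y)=c\,b_m(y)$, whose polynomial solutions are precisely $b_m=\alpha y^c$ with $\alpha\in k^\times$ and $c\in\mathbb{Z}_{\geq 0}$. Replacing $F$ by $\alpha^{-1}F$ preserves $\psi(F)=F$ and the Darboux relation and makes $b_m=y^c$, so setting $b:=c$ completes the construction. The main technical step is the weight-plus-degree box argument that pins $\lambda$ down to a single monomial; the remaining pieces are routine polynomial bookkeeping.
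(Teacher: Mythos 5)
Your proof is correct and follows essentially the same route as the paper's: obtain a Darboux element via Proposition~\ref{p01}, symmetrize over the $\psi$-orbit to get $F$ with $\psi(F)=F$, use the twisted-eigenvector relation $\psi(\lambda)=\omega^{-1}\lambda$ together with the degree bounds $\deg_x\lambda\le u+t$ and $\deg_y\lambda\le s-1$ to force $\lambda=c\,x^{u+t}y^{s-1}$, and then read off $b_m=y^b$ from the coefficient of $x^{m+u+t}$. The only cosmetic difference is your preliminary reduction to an irreducible Darboux factor, which the paper does not need.
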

\begin{proof}
By Proposition~\ref{p01}, there exist $f\in k[x,y] \setminus k$ and $\lambda \in k[x,y]$ such that $d(f)=\lambda f$. It is easy
to prove that $f\notin k[y]$. Hence $\deg_x(f) >0$. Note that if $\lambda \neq 0$, then $\deg_x(\lambda)\leq t+u$ and
$\deg_y(\lambda)\leq s-1$.

Let $F=\prod_{i=0}^{v-1} \psi^i(f)$. Then $\psi(F)=F$ and $\deg_x(F)>0$. Note that for every positive integer $i$,
we have $\psi^{-i}d\psi^i=\omega^id$ and therefore $d(F)=\Lambda F$, where $\Lambda=\sum_{i=0}^{v-1}\omega^i\psi^i(\lambda)$.
Now, if $\Lambda \neq 0$, then
$\deg_x(\Lambda)\leq \deg_x(\lambda)\leq t+u$ and $\deg_y(\Lambda)\leq \deg_y(\lambda)\leq s-1$.
As $\psi^{-1}d\psi=\omega d$ and $\psi(F)=F$, therefore $\psi(\Lambda)=\omega^{-1} \Lambda$.
For $0\leq i \leq t+u$ and $0\leq j \leq s-1$, equating the coefficient of $x^iy^j$ in $\psi(\Lambda)=\omega^{-1} \Lambda$, we
get $\Lambda = b x^{t+u}y^{s-1}$ for some $b\in k$.

Write $F= \sum_{i=0}^m b_ix^i$, where $m\geq 1$, $b_i\in k[y]$
for $0\leq i \leq m$ and $b_m\neq 0$. Then by equating the coefficient of $x^{m+t+u}$ in $d(F)=bx^{t+u}y^{s-1}F$,
we get $(b_m)_yy^s=bb_my^{s-1}$. It follows that $b$ is a non negative integer and $b_m= \gamma y^b$ for some $\gamma \in k\setminus \{0\}$.
Without loss of generality we may assume that $\gamma =1$.
\end{proof}

\begin{Lemma}\label{l03}
Let $d$ denote the $k$-derivation $\partial_x + x^u(x^ty^s+1)\partial_y$ of $k[x,y]$.
If $d$ is not a simple derivation of $k[x,y]$, then there exists a Darboux element
$g=\sum_{i=0}^n a_ix^i$, where $a_i\in k[y,1/y]$ for $0\leq i \leq n$, of $d$ in $k[x,y,1/y]$ such that
\begin{enumerate}
\item[{\rm (a)}] $a_n=1, n=lt$ is a positive integer, and $v|n$.

\item[{\rm (b)}] $yd(g) = -ax^ug$, where $a$ is a non negative integer and $v|a$.

\item[{\rm (c)}] $\psi(g) = g$.

\item[{\rm (d)}] $a_{n-t+i} = 0$ for $1\leq i \leq t-1$.

\item[{\rm (e)}] Let $a_{i,j}$ denote the coefficient of $y^j$ in $a_i$ for $0\leq i \leq n$ and $j\in \mathbb{Z}$;
and $a_{i,j}=0$ for $i\in \mathbb{Z}\setminus \{0,1,\dots,n\}$ and $j\in \mathbb{Z}$. Then for every $\alpha \geq 0$, we have
$a_{i,j} = 0$ for every pair $(i,j)$ such that either $i\geq n-\alpha t$ and $j<-\alpha s$ or $n-(\alpha+1)t < i < n-\alpha t$ and $j<-\alpha s+1$.

\item[{\rm (f)}] Let $\alpha_i = a_{n-it, -is}$ for $i\in \mathbb{Z}$. Then $\alpha_i = 0$ for $i < 0$, $\alpha_0 = 1$,
$$\alpha_{i+1}=\frac{(a-is)\alpha_i}{(i+1)s}$$ for $i\geq 0$ and $\alpha_{i}=0$ for $i\geq l+1$.

\item[{\rm (g)}] Let $\beta_i = a_{n-it-(u+1), -is+1}$ for $i\in \mathbb{Z}$. Then $\beta_i=0$ for $i\geq l$ and $\beta_{-u-2}=0=\beta_0$.

\item[{\rm (h)}] $s > 1$.

\item[{\rm (i)}] For $i \geq 0$, $$\beta_{i+1} = \frac{(a-is+1)\beta_i + (n-it)\alpha_i}{(i+1)s-1}.$$
\end{enumerate}
\end{Lemma}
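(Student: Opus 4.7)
Starting from the polynomial $F = \sum_{i=0}^m b_i x^i$ furnished by Lemma~\ref{l02}, I would pass to the Laurent polynomial $h := F/y^b \in k[x,y,1/y]$. Since $d(y) = x^u(x^ty^s+1)$, a direct calculation using $d(F) = bx^{t+u}y^{s-1}F$ gives
\[
y\,d(h) = \frac{y\,d(F) - b F\,d(y)}{y^{b}} = -b x^u h,
\]
so $h$ is already a Darboux element of $d$ in $k[x,y,1/y]$, monic in $x$ of degree $m$. The identity $\psi(F) = F$ forces $m + (u+1)b \equiv 0 \pmod v$, and hence $\psi(h) = \omega^{-(u+1)b} h = \omega^m h$. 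Taking $g := h^{N}$ with $N$ a positive multiple of $vt$ (for instance $N = vt$) then yields an element of $k[x,y,1/y]$ with $a_n = 1$, $n = Nm$ divisible by both $v$ and $t$ (so $n = lt$ with $l = Nm/t$), $\psi(g) = g$, and $y\,d(g) = -a x^u g$ with $a = Nb$ divisible by $v$. Parts (a), (b), and (c) follow.

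\textbf{Master recurrence.} Writing $g = \sum_i a_i(y) x^i$ with $a_i \in k[y,1/y]$ and comparing coefficients of $x^j$ on both sides of $y\,d(g) = -a x^u g$ gives
\[
(j+1)\, y\, a_{j+1} + y^{s+1}\, \partial_y(a_{j-t-u}) + y\, \partial_y(a_{j-u}) = -a\, a_{j-u}, \qquad j \in \mathbb{Z},
\]
with $a_j = 0$ for $j \notin [0,n]$. Extracting the coefficient of $y^\ell$ on each side yields the scalar form
\[
(j+1)\, a_{j+1,\,\ell-1} + (\ell - s)\, a_{j-t-u,\,\ell-s} = -(a + \ell)\, a_{j-u,\,\ell},
\]
while $\psi(g) = g$ confines the support of $g$ to the lattice $\{(i,\ell) : i + (u+1)\ell \equiv 0 \pmod v\}$.

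\textbf{Properties (d)--(g), (i).} Part (d) comes from applying the master recurrence at $j = n+t+u-1, \ldots, n+u+1$: in each case every term but $y^{s+1}\partial_y(a_{n-i})$ vanishes by degree, so $a_{n-i}$ is a constant in $y$ for $1 \le i \le t-1$, and $\psi$-invariance combined with $v \mid n$ (so $v \nmid n-i$ for $0 < i < t \le v$) kills each such constant. Part (e) is proved by induction on $\alpha$: the base case is (d), and the inductive step uses the scalar recurrence together with the $\psi$-lattice to push the vanishing downward in $j$ and in $\ell$, leaving nonzero coefficients only at the two frontiers described in (f) and (g). Part (f) is the scalar recurrence restricted to the frontier monomials $x^{n-it}y^{-is}$; it collapses to the linear relation $\alpha_{i+1} = (a-is)\alpha_i/((i+1)s)$, and the degree bound $n - (l+1)t < 0$ forces $\alpha_{l+1} = 0$, which in turn forces $a = ls$. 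Parts (g) and (i) are the analogous specialization to the second frontier $x^{n-it-(u+1)}y^{-is+1}$: $\beta_{-u-2} = 0$ because its $x$-index exceeds $n$, and $\beta_0 = 0$ follows from the master recurrence at $j = n-1,\ \ell = 1$, where the previously established vanishings collapse the relation once $s > 1$; the recurrence in (i) is then the scalar recurrence applied at the appropriate $(j,\ell)$ along this second frontier.

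\textbf{Property (h) and the main obstacle.} For (h), if $s = 1$ then $d = \partial_x + (x^{t+u}y + x^u)\partial_y$ is a Shamsuddin derivation, and Lemma~\ref{000}(c) gives $\mathfrak{p}(x^{t+u}, x^u) = x^u \ne 0$ since $\deg_x x^u = u < t+u$; Theorem~\ref{001} then makes $d$ simple, contradicting the hypothesis. Hence $s > 1$. I expect the chief obstacle to be the combinatorial bookkeeping in (e): one must verify that the $\psi$-lattice together with the two ``shift vectors'' $(t+u, s)$ and $(u, 0)$ of the recurrence interact so that precisely the two frontiers parametrized by $\alpha_i$ and $\beta_i$ survive in each strip $n - (\alpha+1)t < i \le n-\alpha t$, while every other lattice point in that strip is forced to vanish. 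Once this bookkeeping is complete, (f), (g), and (i) drop out as specializations of the scalar master recurrence.
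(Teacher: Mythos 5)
Your construction and overall strategy coincide with the paper's: it too sets $g=(F/y^{b})^{tv}$, derives the same scalar recurrence (Equation~(\ref{eq2})), and obtains (d), (e), (f) and (i) essentially as you outline. Your proof of (h) is genuinely different and valid: the paper deduces $s>1$ internally, from the $i=0$ instance of (i) after $\beta_0=0$ is known (namely $(s-1)\beta_1=(a+1)\beta_0+n=n>0$), whereas you observe that for $s=1$ the derivation is Shamsuddin and invoke Theorem~\ref{001} together with Lemma~\ref{000}(c); both work, and yours is arguably cleaner and independent of the $\beta$-analysis.

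There is, however, a genuine gap at $\beta_0=0$. Your master recurrence at $(j,\ell)=(n-1,1)$ reads $n+(1-s)\beta_1=-(a+1)\beta_0$; this is exactly the $i=0$ case of (i), it ties $\beta_0$ to $\beta_1$ and the nonzero constant $n$, and it cannot ``collapse'' to $\beta_0=0$ for any value of $s$. Nothing else you have established kills $a_{n-(u+1),1}$ either: part (e) only controls exponents $j<1$ in the relevant strips, and the $\psi$-lattice is no obstruction since $(n-u-1)+(u+1)\cdot 1=n\equiv 0\pmod v$. The correct argument is the one your computation of $\beta_{-u-2}=0$ is setting up but which you never run: apply the recurrence (i) (i.e.\ Equation~(\ref{eq4})) for negative indices $i$, where $\alpha_i=0$, so that $((i+1)s-1)\beta_{i+1}=(a-is+1)\beta_i$ with $(i+1)s-1\neq 0$ for $i\le -1$; the chain $\beta_{-u-2}=0\Rightarrow\beta_{-u-1}=0\Rightarrow\cdots\Rightarrow\beta_0=0$ then closes the argument. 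A second, smaller error: in (f) you assert that $\alpha_{l+1}=0$ ``forces $a=ls$.'' It does not; it only forces $a=\delta s$ for some $0\le\delta\le l$, since the chain of nonzero $\alpha_i$ may terminate before $i=l$. This distinction is not needed for the lemma itself, but it is precisely what the proof of Theorem~\ref{l04} turns on (both cases $\delta=l$ and $\delta<l$ must be excluded), so believing $a=ls$ here would derail the downstream argument.
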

\begin{proof}
Let $F$ be as in Lemma~\ref{l02} and let $\pi =tv$. Put $g=(F/b_m)^{\pi}=F^{\pi}/y^{\pi b}$.
Then $g$ is monic in $x$ and $\deg_x(g)=\pi m>0$. Let $g=\sum_{i=0}^n a_ix^i$, where $a_i\in k[y,1/y]$
for $0\leq i \leq n$ and $a_n\neq 0$. Then $a_n=1$, $v|n$ and $n=lt$ for some positive integer $l$.
Moreover, $d(g) = -(ax^u/y)g$, where $a=\pi b$ is a non negative integer and $v|a$. As $F^\pi=y^ag$ and $v|a$, therefore
$\psi(g)=g$. Hence $(a), (b)$, and $(c)$ follow.

From $(b)$, we have
\begin{equation}\label{eq1}
\sum_{i=0}^{n-1} (i+1) a_{i+1}yx^i+\sum_{i=u+t}^{n+u+t} (a_{i-u-t})_yy^{s+1}x^i+\sum_{i=u}^{n+u} ((a_{i-u})_yy+aa_{i-u})x^i=0.
\end{equation}

For $1\leq i \leq t-1$, equating the coefficient of $x^{n+u+i}$ in Equation~(\ref{eq1}), we get $(a_{n-t+i})_y=0$. Therefore $a_{n-t+i} \in k$.
As $\psi(g)=g$ and $v|n$, therefore $a_{n-t+i}=0$ for $1\leq i \leq t-1$. Hence $(d)$ follows.

By equating the coefficient of $x^iy^j~(0\leq i \leq n+t+u \text{ and } j\in \mathbb{Z})$ in Equation~(\ref{eq1}), we get
$$(i+1)a_{i+1,j-1} + (j-s)a_{i-t-u,j-s} + (j+a)a_{i-u,j}=0.$$
Therefore for every $p\in \mathbb{Z}$ and $q\in \mathbb{Z}$, we have
\begin{equation}\label{eq2}
-qa_{p,q} = (p+t+u+1)a_{p+t+u+1,q+s-1} + (q+s+a)a_{p+t,q+s}.
\end{equation}

We prove the statement $(e)$ by induction on $\alpha$. For $\alpha=0$, it follows from $(a)$ and $(d)$. Now, assume that
$a_{i,j} = 0$ for every pair $(i,j)$ such that either $i\geq n-\alpha t$ and $j<-\alpha s$ or $n-(\alpha+1)t < i < n-\alpha t$ and $j<-\alpha s+1$.
Then $a_{i,j}=0$ for $i>n-(\alpha +1)t$ and $j<-(\alpha +1)s$ and by Equation~(\ref{eq2}), we get $a_{n-(\alpha +1)t,j}=0$ for $j<-(\alpha +1)s$.
Again by using the induction hypothesis and Equation~(\ref{eq2}), we get $a_{i,j}=0$ for
$n-(\alpha+2)t < i < n-(\alpha +1) t$ and $j<-(\alpha +1)s+1$.
Hence $(e)$ follows.

As $n-it > n$ if $i<0$, therefore $\alpha_i=0$ if $i<0$. Clearly, $\alpha_0 = a_{n,0}=1$.
For $i \geq 0$, by $(e)$, we have $a_{n-it+u+1,-is-1}=0$, therefore from Equation~(\ref{eq2}), we get
\begin{equation}\label{eq3}
\alpha_{i+1}=\frac{(a-is)\alpha_i}{(i+1)s}.
\end{equation}
As $n=lt$, therefore $\alpha_{l+1}=a_{-t,-(l+1)s}=0$. Then by Equation~(\ref{eq3}), $\alpha_i=0$ for $i\geq l+1$.
Hence $(f)$ follows.

Clearly $\beta_{-u-2}=0=\beta_i$ for $i\geq l$ as $t\geq 1$ and $n=lt$. For $i\in \mathbb{Z}$, by Equation~(\ref{eq2}), we have
\begin{equation}\label{eq4}
((i+1)s-1)\beta_{i+1} = (a-is+1)\beta_i + (n-it)\alpha_i.
\end{equation}

As $\alpha_i=0$ for $i<0$, therefore $\beta_{-u-2}=0\Rightarrow \beta_{-u-1}=0  \Rightarrow \dots \Rightarrow \beta_0=0$.
Now, from Equation~(\ref{eq4}), we have $(s-1)\beta_1=n>0$. Therefore $s\neq 1$. Now, $(i)$ follows from Equation~(\ref{eq4}).
\end{proof}

\begin{Theorem}\label{l04}
Let $k$ be a field of characteristic zero. Let $d$ denote the $k$-derivation $\partial_x + x^u(c_1x^ty^s+c_2)\partial_y$,
where $s,t \geq 1, u\geq 0$ and $c_1, c_2\in K\setminus \{0\}$. Then $d$ is a simple derivation of $k[x,y]$.
\end{Theorem}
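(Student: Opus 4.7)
The plan is to reduce, via Lemma~\ref{l01}, to the case $c_1 = c_2 = 1$, i.e., $d = \partial_x + x^u(x^t y^s + 1)\partial_y$, and then argue by contradiction using Lemma~\ref{l03}. Assuming $d$ is not simple, Lemma~\ref{l03} yields a Darboux element $g \in k[x,y,1/y]$ and two sequences $\alpha_i = a_{n-it,-is}$ and $\beta_i = a_{n-it-(u+1),-is+1}$ satisfying explicit recurrences with boundary data $\alpha_0 = 1$, $\alpha_i = 0$ for $i \ge l+1$, $\beta_0 = 0$, $\beta_i = 0$ for $i \ge l$, together with $n = lt > 0$ and $s \ge 2$. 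Iterating the recurrence of part~(f) gives the closed form $\alpha_i = \frac{1}{i!\,s^i}\prod_{j=0}^{i-1}(a-js)$; since $\alpha_{l+1} = 0$ but $\alpha_0 = 1$, there exists a smallest $i_0 \in \{0, 1, \ldots, l\}$ with $a = i_0 s$, whence $\alpha_i = \binom{i_0}{i}$ for every $i$.

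Next I would substitute $a = i_0 s$ into the $\beta$ recurrence of part~(i). When $i_0 = 0$ it collapses, for $i \ge 1$, to $\beta_{i+1} = \tfrac{(1-is)\beta_i}{(i+1)s-1}$; since $\beta_1 = n/(s-1) \neq 0$ and each factor $1-is$ is nonzero for $s \ge 2$, one finds $\beta_l \neq 0$, contradicting $\beta_l = 0$. When $i_0 \ge 1$, observe that for $i > i_0$ the coefficient $(i_0-i)s+1$ is at most $1-s \le -1$ and hence nonzero, so $\beta_{i+1} = 0 \iff \beta_i = 0$ for $i > i_0$. If $l > i_0$, backward propagation from $\beta_l = 0$ gives $\beta_{i_0+1} = 0$; if $l = i_0$, substituting $\beta_{i_0} = 0$ and $n = i_0 t$ into the recurrence at $i = i_0$ again forces $\beta_{i_0+1} = 0$. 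Thus in every case $\beta_{i_0+1} = 0$.

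The main obstacle is translating $\beta_{i_0+1} = 0$ into an arithmetic identity that contradicts $n = lt$ with $l \ge i_0$. I would solve the $\beta$ recurrence from $i = 0$ up to $i = i_0+1$ using $\beta_0 = 0$, $a = i_0 s$, and $\alpha_i = \binom{i_0}{i}$, and expect, as direct computation for $i_0 = 1, 2, 3$ confirms, that $\beta_{i_0+1}$ is a nonzero rational scalar times $ns - i_0 t(s-1)$. Therefore $\beta_{i_0+1} = 0$ forces $ns = i_0 t(s-1)$; combined with $n = lt$ this gives $ls = i_0(s-1)$, which is contradicted by $l \ge i_0 \ge 1$ and $s \ge 2$, as then $ls \ge i_0 s > i_0(s-1)$. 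The delicate point is producing the clean factor $ns - i_0 t(s-1)$; this should follow by induction on $i$ after multiplying the $\beta$ recurrence through by a suitable telescoping product such as $\prod_{j=1}^{i}(js-1)$, so that summing the forcing terms collapses into the desired linear combination of $n$ and $t$.
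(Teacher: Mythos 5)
Your overall strategy coincides with the paper's: reduce to $c_1=c_2=1$ via Lemma~\ref{l01}, invoke Lemma~\ref{l03}, deduce from $\alpha_0=1$ and $\alpha_{l+1}=0$ that $a=i_0s$ for a unique $i_0\in\{0,\dots,l\}$ (the paper's $\delta$), and then play the forward $\beta$-recurrence (from $\beta_0=0$) against the backward vanishing forced by $\beta_i=0$ for $i\ge l$. Your backward direction is correct and matches the paper: for $i>i_0$ the coefficient $(i_0-i)s+1\le 1-s<0$ is nonzero, so $\beta_{i+1}=0\iff\beta_i=0$ there, and hence $\beta_{i_0+1}=0$ in every case. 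Your $i_0=0$ case is also complete.

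The gap is in the forward direction for $i_0\ge 1$. Your contradiction rests on the claim that $\beta_{i_0+1}$ is a nonzero rational multiple of $ns-i_0t(s-1)$, which you verify only for $i_0\le 3$ and otherwise defer to an unspecified induction (``this should follow by induction\dots''). That identity is the crux of the argument and is not proved; in particular, multiplying through by $\prod_{j=1}^{i}(js-1)$ does not by itself telescope the recurrence, because the coefficient $(i_0-i)s+1$ of $\beta_i$ still varies with $i$. The identity does appear to be true, but it is unnecessary: since $a=i_0s$ and $l\ge i_0$, every ingredient of the forward recurrence $((i+1)s-1)\beta_{i+1}=((i_0-i)s+1)\beta_i+(n-it)\alpha_i$ is nonnegative for $0\le i\le i_0$ --- the coefficients $(i_0-i)s+1$ and $(i+1)s-1$ are positive, $\alpha_i=\binom{i_0}{i}>0$, and $n-it=(l-i)t\ge 0$ --- while $(s-1)\beta_1=n>0$. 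Hence $\beta_1>0\Rightarrow\beta_2>0\Rightarrow\cdots\Rightarrow\beta_{i_0+1}>0$ by a one-line induction, contradicting $\beta_{i_0+1}=0$; this sign argument is exactly how the paper finishes, and substituting it for your closed-form claim closes the gap.
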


\begin{proof}
By Lemma~\ref{l01}, we can assume that $c_1=c_2=1$. Suppose, if possible, that $d$ is not a simple derivation
of $k[x,y]$. Then we can apply Lemma~\ref{l03}. We use the same notations as those in Lemma~\ref{l03}.
By Lemma~\ref{l03} part $(h)$, $s>1$. Let $\delta \geq 0$ be the largest integer such that $\alpha_{\delta} \neq 0$.
Then $\alpha_{\delta +1} = 0$ and $\delta \leq l$. Now, by Equation~(\ref{eq3}), we get
\begin{equation}\label{eq5}
a=\delta s,~ \alpha_i>0 \text{ for } 0\leq i\leq \delta ~\text{  and  }~ \alpha_i=0 \text{ for } i \geq \delta +1.
\end{equation}

Now as $\beta_0=0$, therefore by Equation~(\ref{eq4}) and Equation~(\ref{eq5}), we have
$$\beta_1 >0 \Rightarrow \beta_2>0 \Rightarrow \dots \Rightarrow \beta_{\delta +1}>0.$$
Therefore $\delta \neq l$ as $\beta_{l+1}=0$. Hence $\delta < l$. Now, by Equation~(\ref{eq4}) and Equation~(\ref{eq5}), we have
$\beta_l=0 \Rightarrow \beta_{l-1}=0\Rightarrow \dots \Rightarrow\beta_{\delta + 1}=0$, which is absurd. Hence $d$ is a simple
derivation of $k[x,y]$.
\end{proof}

\section{The derivation $\partial_x + x^u(c_1x^t+c_2y^s)\partial_y$ of $k[x,y]$}

For this section, we fix some notations. Let $x, y$ be indeterminates, $u\geq 0, s,t\geq 1$ be integers.
Put $v=(s-1)t+(u+1)s$. Let $\omega \in k$ be a primitive $v$th root of unity. Let $\psi$ denote the $k$-algebra automorphism of
$k[x,y]$ given by $\psi(x)=\omega x$ and $\psi(y)=\omega^{u+t+1} y$. These notations shall remain fixed throughout this section.

In this section, we discuss the simplicity of the $k$-derivation $\partial_x + x^u(c_1x^t+c_2y^s)\partial_y$ of $k[x,y]$.

We now prove some lemmas.

\begin{Lemma}\label{l11}
The $k$-derivation $\bar{d} = \partial_x + x^u(c_1x^t+c_2y^s)\partial_y$, where
$c_1, c_2\in k\setminus \{0\}$, of $k[x,y]$ is simple if and only if the $k$-derivation
$d = \partial_x + x^u(x^t+y^s)\partial_y$ of $k[x,y]$ is simple.
\end{Lemma}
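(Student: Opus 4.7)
The plan is to mirror the proof of Lemma~\ref{l01}: I would construct a diagonal $k$-algebra automorphism $\theta$ of $k[x,y]$ sending $x\mapsto \alpha x$ and $y\mapsto \beta y$ for suitably chosen nonzero $\alpha,\beta\in k$, such that $\theta\bar d\theta^{-1}=\alpha^{-1}d$. Since simplicity of a derivation is invariant under conjugation by a $k$-algebra automorphism and under multiplication by a nonzero scalar, this conjugation identity immediately yields the desired equivalence of simplicities.

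To pin down $\alpha$ and $\beta$, I would evaluate $\theta\bar d\theta^{-1}$ on the generators. The value on $x$ is automatic, $\theta\bar d\theta^{-1}(x)=\alpha^{-1}$, which forces the target scalar to be $\alpha^{-1}$. A short computation gives
\[
\theta\bar d\theta^{-1}(y)=\beta^{-1}\alpha^{u+t}c_1\,x^{u+t}+\beta^{s-1}\alpha^{u}c_2\,x^{u}y^{s},
\]
and matching this expression against $\alpha^{-1}(x^{u+t}+x^{u}y^{s})$ produces the two scalar conditions $\beta=c_1\alpha^{u+t+1}$ and $\beta^{s-1}=c_2^{-1}\alpha^{-(u+1)}$.

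Eliminating $\beta$ reduces to the single requirement $\alpha^{N}=c_1^{1-s}c_2^{-1}$, where $N=(u+t+1)(s-1)+(u+1)$. Expanding yields $N=(u+1)s+(s-1)t$, which is precisely the integer $v$ fixed at the start of Section~4. Since $k$ is algebraically closed, one may choose $\alpha\in k$ with $\alpha^{v}=c_1^{1-s}c_2^{-1}$, and then $\beta:=c_1\alpha^{u+t+1}$ satisfies both equations simultaneously. The resulting $\theta$ is a $k$-algebra automorphism and, by construction, conjugates $\bar d$ into $\alpha^{-1}d$.

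There is no real obstacle here: this is the same diagonal-rescaling trick as in Lemma~\ref{l01}, and its success rests only on extracting a $v$th root in $k$. The one point worth verifying carefully is the exponent identity $(u+t+1)(s-1)+(u+1)=v$, which is exactly what makes the integer $v$ fixed at the start of the section the correct normalization for this particular derivation.
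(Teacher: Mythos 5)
Your proposal is correct and coincides with the paper's proof: the paper uses exactly the automorphism $\theta(x)=\alpha x$, $\theta(y)=c_1\alpha^{u+t+1}y$ with $\alpha^v=c_1^{1-s}c_2^{-1}$, which is precisely the $(\alpha,\beta)$ you derive. Your write-up simply makes explicit the computation that determines these constants, including the exponent identity $(u+t+1)(s-1)+(u+1)=v$.
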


\begin{proof}
Let $\alpha \in k$ be such that $\alpha^v=c_1^{1-s}c_2^{-1}$. Define the $k$-algebra automorphism
$\theta:k[x,y]\rightarrow k[x,y]$ by $\theta(x)=\alpha x$ and $\theta(y)=c_1\alpha^{u+t+1} y$.
Then $\theta \bar{d} \theta^{-1} = \alpha^{-1}d$. Therefore, $\bar{d}$ is simple if and only if $d$ is simple.
\end{proof}

\begin{Lemma}\label{l12}
Let $d$ denote the $k$-derivation $\partial_x + x^u(x^t+y^s)\partial_y$ of $k[x,y]$, where $s\geq 2$.
If $d$ is not a simple derivation of $k[x,y]$, then there exists
$F=\sum_{i=0}^m b_ix^i$, where $m>0$ and $b_i\in k[y]$ for $0\leq i \leq m$, such that
$d(F)=bx^{u}y^{s-1}F$, $b\in k$, $\psi(F)=F$ and $b_m=1$.
\end{Lemma}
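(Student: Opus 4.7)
The plan is to mirror the proof of Lemma~\ref{l02}, with the obvious adjustments for the new derivation $d=\partial_x+x^u(x^t+y^s)\partial_y$ and the new constants $v=(s-1)t+(u+1)s$ and $\psi(y)=\omega^{u+t+1}y$. First I will invoke Proposition~\ref{p01}: since $d(x)=1$ we have $(d(x),d(y))=k[x,y]$, so failure of simplicity yields a Darboux element $f\in k[x,y]\setminus k$ with $d(f)=\lambda f$ for some $\lambda\in k[x,y]$. A quick argument rules out $f\in k[y]$ (otherwise $d(f)=x^u(x^t+y^s)f'(y)$ and comparing the coefficient of $x^{u+t}$ would force $f\mid f'$ in $k[y]$), so $\deg_x(f)>0$. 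I then bound $\lambda$ by comparing $x$- and $y$-degrees in $d(f)=\lambda f$: the dominant contributions come from $x^{u+t}f_y$ and $x^u y^s f_y$, giving $\deg_x(\lambda)\le u+t$ and $\deg_y(\lambda)\le s-1$ whenever $\lambda\ne 0$.

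Next I symmetrize under $\psi$. The conjugation identity $\psi^{-1}d\psi=\omega d$ reduces to the arithmetic fact $v=(u+t+1)s-t$, which makes the $y^s$-phase align with the $x^t$-phase. Setting $F=\prod_{i=0}^{v-1}\psi^i(f)$ then yields $\psi(F)=F$, $\deg_x(F)>0$, and $d(F)=\Lambda F$ with $\Lambda=\sum_{i=0}^{v-1}\omega^i\psi^i(\lambda)$ inheriting the same degree bounds and satisfying $\psi(\Lambda)=\omega^{-1}\Lambda$.

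The core step, which I expect to be the main obstacle, is pinning down $\Lambda$. Equivariance under $\psi$ forces any surviving monomial $x^i y^j$ of $\Lambda$ to satisfy $i+(u+t+1)j\equiv -1\pmod v$ with $0\le i\le u+t$ and $0\le j\le s-1$. Because $(u+t+1)(s-1)=v-(u+1)$, the sum $i+(u+t+1)j$ lies in $[0,v+t-1]\subset[0,2v)$, so it must equal exactly $v-1$; a short case split on $j$ then rules out every pair except $(i,j)=(u,s-1)$ (for $j\le s-2$ one is forced into $i\ge 2u+t+1>u+t$). Hence $\Lambda=bx^u y^{s-1}$ for some $b\in k$. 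Finally, writing $F=\sum_{i=0}^m b_i x^i$ with $b_m\ne 0$ and comparing the coefficient of $x^{m+u+t}$ in $d(F)=bx^u y^{s-1}F$, the right side vanishes at this degree (since $t\ge 1$) while the left side contributes $(b_m)_y$, so $b_m\in k\setminus\{0\}$. Rescaling $F$ by $1/b_m$ preserves $\psi$-invariance and the Darboux equation and produces the required normalization $b_m=1$.
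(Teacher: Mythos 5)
Your proposal is correct and follows essentially the same route as the paper's proof: invoke Proposition~\ref{p01} to get a Darboux element, symmetrize over the cyclic group generated by $\psi$ using $\psi^{-1}d\psi=\omega d$, use $\psi(\Lambda)=\omega^{-1}\Lambda$ together with the degree bounds to force $\Lambda=bx^uy^{s-1}$, and compare the coefficient of $x^{m+t+u}$ to get $b_m\in k\setminus\{0\}$. The only difference is that you spell out details the paper leaves implicit (why $f\notin k[y]$, the verification that $(u+t+1)s-t=v$, and the congruence case split identifying $(i,j)=(u,s-1)$), all of which check out.
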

\begin{proof}
By Proposition~\ref{p01}, there exist $f\in k[x,y] \setminus k$ and $\lambda \in k[x,y]$ such that $d(f)=\lambda f$. It is easy
to prove that $f\notin k[y]$. Hence $\deg_x(f) >0$. Note that if $\lambda \neq 0$, then $\deg_x(\lambda)\leq t+u$ and
$\deg_y(\lambda)\leq s-1$.

Let $F=\prod_{i=0}^{v-1} \psi^i(f)$. Then $\psi(F)=F$ and $\deg_x(F)>0$. Note that for every positive integer $i$,
we have $\psi^{-i}d\psi^i=\omega^id$ and therefore $d(F)=\Lambda F$, where $\Lambda=\sum_{i=0}^{v-1}\omega^i\psi^i(\lambda)$.
Now, if $\Lambda \neq 0$, then
$\deg_x(\Lambda)\leq \deg_x(\lambda)\leq t+u$ and $\deg_y(\Lambda)\leq \deg_y(\lambda)\leq s-1$.
As $\psi^{-1}d\psi=\omega d$ and $\psi(F)=F$, therefore $\psi(\Lambda)=\omega^{-1} \Lambda$.
For $0\leq i \leq t+u$ and $0\leq j \leq s-1$, equating the coefficient of $x^iy^j$ in $\psi(\Lambda)=\omega^{-1} \Lambda$, we
get $\Lambda = b x^{u}y^{s-1}$ for some $b\in k$.

Write $F= \sum_{i=0}^m b_ix^i$, where $m\geq 1$, $b_i\in k[y]$
for $0\leq i \leq m$ and $b_m\neq 0$. Then by equating the coefficient of $x^{m+t+u}$ in $d(F)=bx^{u}y^{s-1}F$,
we get $(b_m)_y=0$ and hence $b_m \in k\setminus \{0\}$.
Without loss of generality we may assume that $b_m =1$.
\end{proof}

\begin{Lemma}\label{l13}
Let $d$ denote the $k$-derivation $\partial_x + x^u(x^t+y^s)\partial_y$ of $k[x,y]$, where $s\geq 2$.
If $d$ is not a simple derivation of $k[x,y]$, then there exists a Darboux element
$g=\sum_{i=0}^n a_ix^i$, where $a_i\in k[y]$ for $0\leq i \leq n$, of $d$ in $k[x,y]$ such that
\begin{enumerate}
\item[{\rm (a)}] $a_n=1, n=lt$ is a positive integer, and $v|n$.

\item[{\rm (b)}] $d(g) = ax^u y^{s-1}g$, where $a\in k$.

\item[{\rm (c)}] $\psi(g) = g$.

\item[{\rm (d)}] $a_{n-t+i} = 0$ for $1\leq i \leq t-1$.

\item[{\rm (e)}] Let $a_{i,j}$ denote the coefficient of $y^j$ in $a_i$ for $0\leq i \leq n$ and $j\geq 0$;
$a_{i,j}=0$ for $0\leq i \leq n$ and $j< 0$;
and $a_{i,j}=0$ for $i\in \mathbb{Z}\setminus \{0,1,\dots,n\}$ and $j\in \mathbb{Z}$. Then for every $\alpha \geq 1$, we have
$a_{i,j} = 0$ for every $i> n-\alpha t$ and $j>\alpha s$.

\item[{\rm (f)}] Let $\alpha_i = a_{n-it, is}$ for $i\geq 0$. Then $\alpha_0 = 1$,
$$\alpha_{i+1}=\frac{(a-is)\alpha_i}{(i+1)s}$$ for $i\geq 0$ and $\alpha_{i}=0$ for $i\geq l+1$.

\item[{\rm (g)}] Let $\beta_i = a_{n-it-(u+1), (i-1)s+1}$ for $i\geq 0$. Then $\beta_i = 0$ for $i \geq l$, $\beta_0 = 0$ and for $i \geq 0$,
$$\beta_{i+1} = \frac{(a-1-(i-1)s)\beta_i - (n-it)\alpha_i}{is+1}.$$
\end{enumerate}
\end{Lemma}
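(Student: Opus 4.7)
The plan is to mirror the proof of Lemma~\ref{l03} with two adjustments: since $b_m = 1$ in Lemma~\ref{l12}, no localisation at $y$ is required, and the master equation acquires a $y^s$ term coming from $(x^t+y^s)$ in place of the constant. Starting from the polynomial $F$ produced by Lemma~\ref{l12}, I would put $\pi = tv$ and define $g = F^{\pi}$. Since $b_m = 1$ and $\psi(F) = F$, the polynomial $g \in k[x,y]$ is monic in $x$ with $n := \deg_x(g) = \pi m = tvm = lt$, where $l = vm > 0$, so $v \mid n$. Raising $d(F) = b x^u y^{s-1} F$ to the $\pi$-th power yields $d(g) = \pi b\, x^u y^{s-1} g = a x^u y^{s-1} g$ with $a := \pi b \in k$, and $\psi$-invariance of $g$ is inherited from $\psi(F) = F$. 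This settles (a), (b), (c).

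Next, I would expand $d(g) - a x^u y^{s-1} g = 0$, extract the coefficient of $x^I y^J$, and rewrite (after setting $p = I - u - t$, $q = J + 1$) as the master recurrence
\begin{equation*}
q\, a_{p,q} = -(p+u+t+1)\, a_{p+u+t+1,\,q-1} - (q-s-a)\, a_{p+t,\,q-s}.
\end{equation*}
For (d), equating coefficients of $x^{n+u+i}$ with $1 \le i \le t-1$ leaves only $(a_{n-t+i})_y = 0$, so $a_{n-t+i} \in k$; then $\psi(g) = g$, $v \mid n$, and $v \ge (s-1)t + s > t$ (using $s \ge 2$) force $a_{n-t+i} = 0$. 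For (e) I would induct on $\alpha$: the base case $\alpha = 1$ is immediate from $a_n = 1$ and (d), and the inductive step applies the master recurrence in the new band $n - (\alpha+1)t < p \le n - \alpha t$, $q > (\alpha+1) s$, noting that both right-hand indices $(p+u+t+1, q-1)$ and $(p+t, q-s)$ lie in the region already covered by the hypothesis (the inequality $q - 1 > \alpha s$ relying on $s \ge 2$); since $q \ne 0$, this forces $a_{p,q} = 0$.

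Parts (f) and (g) then come from specialising the master recurrence. Setting $(p,q) = (n-(i+1)t,\,(i+1)s)$ makes $a_{p,q} = \alpha_{i+1}$; the term $a_{n-it+u+1,\,(i+1)s-1}$ vanishes by (e) (again using $s \ge 2$) and the other term equals $(is-a)\alpha_i$, yielding $(i+1)s\,\alpha_{i+1} = (a-is)\alpha_i$. The vanishing $\alpha_{l+1} = 0$ follows from the index $n-(l+1)t = -t < 0$, and the recurrence then propagates zero to all $i \ge l+1$. The specialisation $(p,q) = (n-(i+1)t-(u+1),\,is+1)$ identifies $a_{p,q}$ with $\beta_{i+1}$, and the two right-hand terms become $-(n-it)\alpha_i$ and $(a-1-(i-1)s)\beta_i$, producing the asserted recurrence; the boundary conditions $\beta_0 = 0$ and $\beta_i = 0$ for $i \ge l$ are forced by $1-s < 0$ and $n-lt-(u+1) = -(u+1) < 0$, respectively. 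The main obstacle is the index bookkeeping in (e) and the verification that each specialisation kills exactly one of the two right-hand terms of the master recurrence; the hypothesis $s \ge 2$ is precisely what makes the inductive window close.
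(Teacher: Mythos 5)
Your proposal is correct and follows essentially the same route as the paper's own proof: setting $g=F^{tv}$, deriving the single coefficient recurrence from $d(g)=ax^uy^{s-1}g$, proving (e) by induction on $\alpha$, and obtaining (f) and (g) by the two specialisations you describe, with the boundary conditions coming from negative indices and $1-s<0$. The index bookkeeping you supply matches the paper's argument, so no further changes are needed.
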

\begin{proof}
Let $F$ be as in Lemma~\ref{l12} and let $\pi =tv$. Put $g=F^{\pi}$.
Then $g$ is monic in $x$ and $\deg_x(g)=\pi m>0$. Let $g=\sum_{i=0}^n a_ix^i$, where $a_i\in k[y]$
for $0\leq i \leq n$ and $a_n\neq 0$. Then $a_n=1$, $v|n$ and $n=lt$ for some positive integer $l$.
Moreover, $d(g) = ax^uy^{s-1}g$, where $a=\pi b$. As $F^\pi=g$, therefore
$\psi(g)=g$. Hence $(a), (b)$, and $(c)$ follow.

From $(b)$, we have
\begin{equation}\label{eq11}
\sum_{i=0}^{n-1} (i+1) a_{i+1}x^i + \sum_{i=u+t}^{n+u+t} (a_{i-u-t})_yx^i + \sum_{i=u}^{n+u} ((a_{i-u})_yy^s-aa_{i-u}y^{s-1})x^i =0.
\end{equation}

For $1\leq i \leq t-1$, equating the coefficient of $x^{n+u+i}$ in Equation~(\ref{eq11}), we get $(a_{n-t+i})_y=0$. Therefore $a_{n-t+i} \in k$.
As $\psi(g)=g$ and $v|n$, therefore $a_{n-t+i}=0$ for $1\leq i \leq t-1$. Hence $(d)$ follows.

By equating the coefficient of $x^iy^j~(0\leq i \leq n+t+u \text{ and } j\geq 0)$ in Equation~(\ref{eq11}), we get
$$(i+1)a_{i+1,j} + (j+1)a_{i-t-u,j+1} + (j-s+1-a)a_{i-u,j-s+1}=0.$$
Therefore for every $p, q\in \mathbb{Z}$, we have
\begin{equation}\label{eq12}
-qa_{p,q} = (p+t+u+1)a_{p+t+u+1,q-1} + (q-s-a)a_{p+t,q-s}.
\end{equation}

We prove the statement $(e)$ by induction on $\alpha$. For $\alpha=1$, it follows from $(a)$ and $(d)$. Now, assume that
$a_{i,j} = 0$ for every $i> n-\alpha t$ and $j>\alpha s$.
Then by using the induction hypothesis and Equation~(\ref{eq12}), we get $a_{i,j}=0$ for
$i > n-(\alpha +1) t$ and $j>(\alpha +1)s$.
Hence $(e)$ follows.

Clearly, $\alpha_0 = a_{n,0}=1$.
For $i \geq 0$, by $(e)$, we have $a_{n-it+u+1,(i+1)s-1}=0$, therefore from Equation~(\ref{eq12}), we get
\begin{equation}\label{eq13}
\alpha_{i+1}=\frac{(a-is)\alpha_i}{(i+1)s}.
\end{equation}
As $n=lt$, therefore $\alpha_{l+1}=a_{-t,-(l+1)s}=0$. Then by Equation~(\ref{eq13}), $\alpha_i=0$ for $i\geq l+1$.
Hence $(f)$ follows.

Clearly $\beta_i=0$ for $i\geq l$ as $n=lt$ and $\beta_0=0$ as $s\geq 2$. For $i\geq 0$, by Equation~(\ref{eq12}), we have
\begin{equation}\label{eq14}
\beta_{i+1} = \frac{(a-1-(i-1)s)\beta_i - (n-it)\alpha_i}{is+1}.
\end{equation}
\end{proof}

\begin{Theorem}\label{l24}
Let $k$ be a field of characteristic zero. Let $d$ denote the $k$-derivation $\partial_x + x^u(c_1x^t+c_2y^s)\partial_y$,
where $s\geq 2,t \geq 1, u\geq 0$ and $c_1, c_2\in K\setminus \{0\}$. Then $d$ is a simple derivation of $k[x,y]$.
\end{Theorem}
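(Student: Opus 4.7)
The plan is to imitate the proof of Theorem~\ref{l04} step by step, using the section~4 lemmas in place of their section~3 counterparts. By Lemma~\ref{l11} I may assume $c_1 = c_2 = 1$, so it suffices to show that $d = \partial_x + x^u(x^t + y^s)\partial_y$ is simple. Assuming for contradiction that $d$ is not simple, Lemma~\ref{l13} supplies a Darboux element $g$ together with coefficient sequences $(\alpha_i)_{i\geq 0}$ and $(\beta_i)_{i\geq 0}$ satisfying the recurrences in parts~(f) and~(g).

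The first step is to pin down $a$. Since $\alpha_0 = 1$ and $\alpha_i = 0$ for $i \geq l+1$, I let $\delta \geq 0$ be the largest integer with $\alpha_\delta \neq 0$. The vanishing of $\alpha_{\delta+1}$ in the recurrence $\alpha_{i+1} = (a-is)\alpha_i/((i+1)s)$ forces $a = \delta s$, and then $\alpha_i = \binom{\delta}{i} > 0$ for $0 \leq i \leq \delta$ while $\alpha_i = 0$ for $i \geq \delta+1$; in particular $\delta \leq l$.

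The second step tracks signs along the $\beta$-sequence. With $a = \delta s$, recurrence~(g) reads
\[ \beta_{i+1} = \frac{\bigl((\delta - i + 1)s - 1\bigr)\beta_i - (n-it)\alpha_i}{is+1}. \]
Starting from $\beta_0 = 0$ gives $\beta_1 = -n < 0$. For $1 \leq i \leq \delta$ the coefficient $(\delta-i+1)s-1$ is strictly positive because $s \geq 2$, while $(n-it)\alpha_i \geq 0$ since $i \leq \delta \leq l$, so induction yields $\beta_{i+1} < 0$; in particular $\beta_{\delta+1} < 0$. Combined with $\beta_l = 0$ from~(g), this already forces $\delta < l$.

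The endgame is a backward descent. For $i \geq \delta+1$ we have $\alpha_i = 0$, so the recurrence collapses to $\beta_{i+1} = ((\delta-i+1)s-1)\beta_i/(is+1)$, and the coefficient $(\delta-i+1)s - 1$ is nonzero for every integer $i$ when $s \geq 2$. Therefore $\beta_i = 0 \iff \beta_{i+1} = 0$, and descending from $\beta_l = 0$ produces $\beta_{\delta+1} = 0$, contradicting $\beta_{\delta+1} < 0$. The one place where the argument genuinely needs $s \geq 2$ — and the step I expect to be the main obstacle to verify cleanly — is exactly this non-vanishing of $(\delta - i + 1)s - 1$; if $s = 1$ were allowed, the coefficient would vanish at $i = \delta$ and the backward descent would stall, so the hypothesis $s \geq 2$ in the theorem enters the proof in a decisive way precisely here.
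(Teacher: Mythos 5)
Your argument is correct and follows the paper's proof of Theorem~\ref{l24} essentially verbatim: reduce to $c_1=c_2=1$ via Lemma~\ref{l11}, extract $a=\delta s$ and the positivity of the $\alpha_i$ from the recurrence in Lemma~\ref{l13}(f), propagate $\beta_i<0$ forward to force $\delta<l$, and run the collapsed recurrence backward from $\beta_l=0$ to contradict $\beta_{\delta+1}<0$. One small correction to your closing remark: the backward descent only uses indices $i\geq\delta+1$, where $(\delta-i+1)s-1\leq -1$ for every $s\geq 1$, so the places where $s\geq 2$ is genuinely indispensable are the forward step at $i=\delta$ (whose coefficient is $s-1$, which must be positive) and the vanishing $\beta_0=0$ supplied by Lemma~\ref{l13}.
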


\begin{proof}
By Lemma~\ref{l11}, we can assume that $c_1=c_2=1$. Suppose, if possible, that $d$ is not a simple derivation
of $k[x,y]$. Then we can apply Lemma~\ref{l13}. We use the same notations as those in Lemma~\ref{l13}.
Let $\delta \geq 0$ be the largest integer such that $\alpha_{\delta} \neq 0$.
Then $\alpha_{\delta +1} = 0$ and $\delta \leq l$. Now, by Equation~(\ref{eq13}), we get
\begin{equation}\label{eq15}
a=\delta s,~ \alpha_i>0 \text{ for } 0\leq i\leq \delta ~\text{  and  }~ \alpha_i=0 \text{ for } i \geq \delta +1.
\end{equation}

Now as $\beta_0=0$, therefore by Equation~(\ref{eq14}) and Equation~(\ref{eq15}), we have
$$\beta_1 <0 \Rightarrow \beta_2<0 \Rightarrow \dots \Rightarrow \beta_{\delta +1}<0.$$
Therefore $\delta \neq l$ as $\beta_{l+1}=0$. Hence $\delta < l$. Now, by Equation~(\ref{eq14}) and Equation~(\ref{eq15}), we have
$\beta_l=0 \Rightarrow \beta_{l-1}=0\Rightarrow \dots \Rightarrow\beta_{\delta + 1}=0$, which is absurd. Hence $d$ is a simple
derivation of $k[x,y]$.
\end{proof}

\section{The derivation $y^r\partial_x + (c_1x^{t_1}y^{s_1}+c_2x^{t_2}y^{s_2})\partial_y$ of $k[x,y]$}

In this section, we give a necessary and sufficient condition for the $k$-derivation $y^r\partial_x + (c_1x^{t_1}y^{s_1}+c_2x^{t_2}y^{s_2})\partial_y$,
where $r, t_1, s_1, t_2, s_2 \geq 0$ and $c_1, c_2\in k$,
of $k[x,y]$ to be simple.

\begin{Theorem}
Let $k$ be a field of characteristic zero. Let $d$ denote the $k$-derivation $y^r\partial_x + (c_1x^{t_1}y^{s_1}+c_2x^{t_2}y^{s_2})\partial_y$
of $k[x,y]$, where $r,t_1, s_1, t_2, s_2 \geq 0$ are integers, $c_1, c_2 \in k$, $t_2 \leq t_1$ and $(t_1, s_1)\neq (t_2, s_2)$.
Then $d$ is a simple derivation of $k[x,y]$ if and only if the following conditions are satisfied:
\begin{enumerate}
\item[{\rm (a)}] $c_1\neq 0, c_2\neq 0$ and $t_2<t_1$.

\item[{\rm (b)}] If $r>0$, then $t_2=s_2=0$ and $s_1>0$.

\item[{\rm (c)}] If $r=0$, then $s_1+s_2>0$ and $s_1s_2=0$.

\item[{\rm (d)}] If $r=s_1=0$ and $s_2=1$, then $(t_2+1)\nmid (t_1-t_2)$.
\end{enumerate}
\end{Theorem}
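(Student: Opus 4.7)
The strategy is to reduce each direction to material already in place: Proposition~\ref{p01} supplies the obstruction machinery (Darboux elements and proper $d$-invariant ideals), and the four simplicity criteria (Theorems~\ref{001}, \ref{002}, \ref{l04}, \ref{l24}) do the positive work. In both directions one splits into the cases allowed or excluded by (a)--(d).

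For \emph{sufficiency}, assume (a)--(d). The condition $t_2<t_1$ lets me write the $\partial_y$-coefficient as $x^{t_2}\bigl(c_1x^{t_1-t_2}y^{s_1}+c_2y^{s_2}\bigr)$. If $r>0$, condition (b) forces $t_2=s_2=0$ and $s_1,t_1\geq 1$, so the derivation is $y^r\partial_x+(c_1x^{t_1}y^{s_1}+c_2)\partial_y$ and Theorem~\ref{002} applies. If $r=0$, condition (c) gives three sub-cases. With $s_1>0, s_2=0$, choose $u=t_2, t=t_1-t_2, s=s_1$ in Theorem~\ref{l04}. With $s_1=0, s_2\geq 2$, choose $u=t_2, t=t_1-t_2, s=s_2$ in Theorem~\ref{l24}. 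With $s_1=0, s_2=1$, the derivation is Shamsuddin with $a=c_2x^{t_2}, b=c_1x^{t_1}$; a direct calculation with Lemma~\ref{000}(b),(e) yields $\mathfrak{p}(a,b)=0\iff (t_2+1)\mid(t_1-t_2)$, and Theorem~\ref{001} then identifies simplicity with condition (d).

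For \emph{necessity}, I would contrapose each clause and build an explicit obstruction. Three recurring mechanisms suffice: (O1) if $y\mid d(y)$ then $y$ is a Darboux element; (O2) if $d(x)$ and $d(y)$ both have zero constant term then $(x,y)$ is a proper $d$-invariant ideal; (O3) if $d(y)$ depends only on $x$ then $\tfrac{y^{r+1}}{r+1}-\int d(y)\,dx$ is a polynomial first integral, hence a Darboux element with eigenvalue $0$. If $c_1=0$ or $c_2=0$, the surviving monomial triggers (O1) or (O3). If $t_1=t_2$, factor $x^{t_1}y^{\min(s_1,s_2)}$ out of $d(y)$: when $\min\geq 1$ apply (O1); when $\min=0$ apply (O2) if $r,t_1\geq 1$, or, using algebraic closedness, pick a root $\alpha$ of $c_1y^{s_1}+c_2$ to make $y-\alpha$ Darboux. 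If $r>0$ and (b) fails (so $t_2\geq 1$, $s_2\geq 1$, or $s_1=0$), the first two cases give (O2) while $s_1=s_2=t_2=0$ falls to (O3). If $r=0$ and (c) fails, then $s_1s_2\geq 1$ yields (O1) and $s_1=s_2=0$ yields (O3). Finally, failure of (d) combined with Theorem~\ref{001} and Lemma~\ref{000}(e) forces $\mathfrak{p}(c_2x^{t_2},c_1x^{t_1})=0$.

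\textbf{Main obstacle.} The real content already lives in Theorems~\ref{l04} and~\ref{l24}; what remains is an exhaustive but elementary case split. The most delicate point is the necessity direction at $t_1=t_2$, which is not covered by any single universal obstruction: one must split on whether $\min(s_1,s_2)=0$, and in the degenerate sub-case further split on whether $r$ and $t_1$ are zero or positive, invoking algebraic closedness of $k$ to factor $c_1y^{s_1}+c_2$ and extract a Darboux element. Ensuring that every configuration ruled out by (a)--(d) admits a genuine obstruction, with no overlap and no gap, is the main bookkeeping task.
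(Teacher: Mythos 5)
Your proposal is correct and follows essentially the same route as the paper: the identical four-way case split for sufficiency (Theorem~\ref{002} for $r>0$, Theorem~\ref{l04} for $r=0,s_2=0$, Theorem~\ref{l24} for $r=0,s_2\geq 2$, and Theorem~\ref{001} with Lemma~\ref{000} for $r=0,s_1=0,s_2=1$), and the same explicit obstructions for necessity, namely Darboux elements, the invariant ideal $(x,y)$, and the polynomial first integral arising when $d(y)\in k[x]$. The only noteworthy difference is at $t_1=t_2$, where the paper sidesteps your sub-case analysis and the appeal to algebraic closedness by observing directly that $c_1y^{s_1}+c_2y^{s_2}$ is a Darboux element.
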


\begin{proof}
Assume that $d$ is a simple derivation of $k[x,y]$. Then:

(a) Let, if possible, $c_1=0$. Then $y$ is a Darboux element of $d$ if $s_2>0$
and $(t_2+1)y^{r+1}-c_2(r+1)x^{t_2+1}$ is a Darboux element of $d$ if $s_2=0$. Which is a contradiction.
Hence $c_1\neq 0$. Similarly $c_2\neq 0$.  If $t_1=t_2$, then $s_1\neq s_2$ and hence $c_1y^{s_1}+c_2y^{s_2}$
is a Darboux element of $d$. Therefore $t_1> t_2$.

(b) Let $r>0$. If $t_2>0$, then $(x,y)$ is a $d$-differential ideal. Therefore $t_2=0$.
If $s_2>0$, then $(x,y)$ is $d$-differential. Therefore $s_2=0$.
Now, if $s_1=0$, then $(c_1x^{t_1}+c_2, y)$ is $d$-differential. Therefore $s_1>0$.

(c) Let $r=0$. If $s_1+s_2=0$, then $c_1(t_2+1)x^{t_1+1}+c_2(t_1+1)x^{t_2+1}-(t_1+1)(t_2+1)y$
is a Darboux element of $d$. Therefore $s_1+s_2>0$. If $s_1s_2\neq 0$, then $y$ is a
Darboux element of $d$. Therefore $s_1s_2=0$.

(d) Let $r=s_1=0$ and $s_2=1$. Then by Lemma~\ref{l11}, the derivation $\partial_x + (x^{t_1}+x^{t_2}y)\partial_y$
is simple. Now by Theorem~\ref{001}, $\mathfrak{p}(x^{t_2}, x^{t_1})\neq 0$ and hence
by Lemma~\ref{000}, we get $(t_2+1)\nmid (t_1-t_2)$.

Conversely, assume that the conditions (a), (b), (c) and (d) are satisfied.
By (a), the derivation $d=y^r\partial_x + x^{u}(c_1x^{t}y^{s_1}+c_2y^{s_2})\partial_y$,
where $u=t_2\geq 0$, $t=t_1-t_2\geq 1$ and $c_1, c_2\in k\setminus \{0\}$.

Case-1: Let $r>0$. Then by (b), the derivation $d=y^r\partial_x + (c_1x^{t}y^{s_1}+c_2)\partial_y$, where
$s_1, t\geq 1$. Hence by Theorem~\ref{002}, $d$ is simple.

Case-2: Let $r=0$ and $s_2=0$. Then by (c), we have $d=\partial_x + x^u(c_1x^{t}y^{s_1}+c_2)\partial_y$, where
$s_1, t\geq 1$ and $u\geq 0$. Hence by Theorem~\ref{l04}, $d$ is simple.

Case-3: Let $r=0$ and $s_2=1$. Then by (c) and (d), we have $s_1=0$ and $(t_2+1)\nmid (t_1-t_2)$. Therefore
by Lemma~\ref{000}, we have $\mathfrak{p}(x^{t_2}, x^{t_1})\neq 0$. Now, by Theorem~\ref{001},
the derivation $\partial_x + (x^{t_1}+x^{t_2}y)\partial_y=\partial_x + x^u(x^{t}+y)\partial_y$ of $k[x,y]$ is simple.
Hence by Lemma~\ref{l11}, the derivation $d=\partial_x + x^u(c_1x^{t}+c_2y)\partial_y$ is simple.

Case-4: Let $r=0$ and $s_2\geq 2$. Then by (c), we have $s_1=0$ and therefore
$d=\partial_x + x^u(c_1x^{t}+c_2y^{s_2})\partial_y$, where $u\geq 0, t\geq 1$ and $s_2\geq 2$.
Hence by Theorem~\ref{l24}, $d$ is simple.
\end{proof}

\begin{Remark}
Although the results proved in this paper are over an algebraically
closed field of characteristic zero, they remain valid over an arbitrary field of
characteristic zero by \cite[Proposition~13.1.3]{n94}.
\end{Remark}

\end{document}